\numberwithin{equation}{section}
\newtheorem{thm}{Theorem}[section]
\newtheorem{prop}{Proposition}[section]
\newtheorem{lem}{Lemma}[section]
\theoremstyle{remark}
\newtheorem{rem}{Remark}[section]
\DeclareMathOperator{\td}{d\!}
\DeclareMathOperator{\te}{e}
\begin{document}

\title[Monotonicity and absolute convexity of two functions]
{Monotonicity and absolute convexity of two functions involving Riemann zeta function}

\author[B.-N. Guo and F. Qi]{Bai-Ni Guo$^1$ and Feng Qi$^{2,*}$}\thanks{$^*$Corresponding author}

\address{$^1$17709 Sabal Court, University Village, Dallas, TX 75252-8024, USA}
\email{bai.ni.guo@gmail.com}
\urladdr{\url{https://orcid.org/0000-0001-6156-2590}}

\address{$^2$School of Mathematics and Physics, Hulunbuir University, Hulunbuir, Inner Mongolia, 021008, China}
\email{qifeng618@gmail.com}
\urladdr{\url{https://orcid.org/0000-0001-6239-2968}}

\begin{abstract}
Let $\rho>0$ be a constant, let $j\ge0$ be an integer, and let $\Gamma(z)$ denote the Euler gamma function. With the aid of the integral representation for the Riemann zeta function $\zeta(z)$, by virtue of a monotonicity rule, and by means of some properties of the function $\frac{1}{\te^t-1}$ and its derivatives, the authors discuss the increasing monotonicity of the function $t\mapsto\binom{t+\rho+j}{\rho}\frac{\zeta(t+\rho)}{\zeta(t)}$, where $\binom{z}{w}$ denotes the extended binomial coefficient, study the absolute convexity and logarithmic convexity of the function $t\mapsto\Gamma(t+j)\zeta(t)$, and derive the increasing monotonicity and inequalities of some sequences involving the ratios $\bigl|\frac{B_{2n+2}} {B_{2n}}\bigr|$ of the Bernoulli numbers $B_{2n}$.
\end{abstract}

\keywords{Riemann zeta function, gamma function, monotonicity, logarithmically convex function, absolutely convex function; integral representation, monotonicity rule, completely monotonic function, ratio of Bernoulli numbers, Stirling number}

\subjclass{Primary 11M06; Secondary 11B73, 11M41, 26A48, 26A51, 33B15}

\maketitle

\section{A concise review}

As usual and by convention, we use the following symbols
\begin{equation*}
\mathbb{Z}=\{0,\pm1,\pm2,\dotsc\}, \quad \mathbb{N}=\{1,2,\dotsc\}, \quad
\mathbb{N}_0=\{0,1,2,\dotsc\}, \quad \mathbb{N}_-=\{-1,-2,\dotsc\}.
\end{equation*}
\par
The classical Euler gamma function $\Gamma(z)$ can be defined~\cite[Chapter~6]{abram} by
\begin{equation*}
\Gamma(z)=\lim_{n\to\infty}\frac{n!n^z}{\prod_{k=0}^n(z+k)}, \quad z\in\mathbb{C}\setminus\{0,-1,-2,\dotsc\}.
\end{equation*}
See also~\cite[Chapter~3]{Temme-96-book}.
In~\cite[Fact~13.3]{Bernstein=2018-MatrixMath}, we find that, for $z\in\mathbb{C}$ such that $\Re(z)>1$, the Riemann zeta function $\zeta(z)$ is defined and satisfies
\begin{equation}\label{zeta-three-def-eq}
\zeta(z)=\sum_{k=1}^{\infty}\frac{1}{k^z}
=\frac{1}{1-2^{-z}}\sum_{k=1}^{\infty}\frac{1}{(2k-1)^z}
=\frac{1}{1-2^{1-z}}\sum_{k=1}^{\infty}(-1)^{k-1}\frac{1}{k^z}
\end{equation}
and
\begin{equation}\label{zeta-int-representation}
\zeta(z)=\frac{1}{\Gamma(z)}\int_{0}^{\infty}\frac{t^{z-1}}{\te^t-1}\td t, \quad \Re(z)>1.
\end{equation}
The last two equalities in~\eqref{zeta-three-def-eq} tell us some reasons why many mathematicians investigated the Dirichlet eta and lambda functions
\begin{equation*}
\eta(z)=\biggl(1-\frac{1}{2^{z-1}}\biggr)\zeta(z)
\quad\text{and}\quad
\lambda(z)=\biggl(1-\frac{1}{2^{z}}\biggr)\zeta(z).
\end{equation*}
According to discussions in~\cite[Section~3.5, pp.~57--58]{Temme-96-book}, the zeta function $\zeta(z)$ has an analytic continuation which has a unique singular point $z=1$, which is a simple pole with residue $1$, on the complex plane $\mathbb{C}$.
\par
We now collect several known properties and applications of the Riemann zeta function $\zeta(z)$, the Dirichlet eta function $\eta(z)$, and the Dirichlet lambda function $\lambda(z)$ as follows.
\begin{enumerate}
\item
In~\cite{kc-wang-zeta-98}, Wang proved that the eta function $\eta(t)$ is logarithmically concave on $(0,\infty)$. In~\cite{CAM-D-18-00067.tex, Rem-Bern-Ratio-Ineq.tex}, making use of Wang's result, the second author of this paper established the first double inequality for bounding the ratio $\frac{|B_{2(j+1)}|}{|B_{2j}|}$ of the Bernoulli numbers $B_{2j}$ for $j\ge1$, where the Bernoulli numbers $B_{2j}$ are generated by
\begin{equation*}
\frac{z}{\te^z-1}=\sum_{n=0}^\infty B_n\frac{z^n}{n!}=1-\frac{z}2+\sum_{j=1}^\infty B_{2j}\frac{z^{2j}}{(2j)!}, \quad \vert z\vert<2\pi.
\end{equation*}
\item
In~\cite{Cerone-Dragomie-Math-Nachr-2009}, Cerone and Dragomir proved that the reciprocal $\frac{1}{\zeta(t)}$ is concave on $(1,\infty)$.
\item
In~\cite{Zhu-Hua-JIA-2010}, Zhu and Hua proved that the sequence $\lambda(j)$ for $j\in\mathbb{N}$ is decreasing. This result was applied in~\cite{CAM-D-18-00067.tex, Rem-Bern-Ratio-Ineq.tex} to establish the first double inequality for bounding the ratio $\frac{|B_{2(j+1)}|}{|B_{2j}|}$ for $j\in\mathbb{N}$. In~\cite{Zhu-RACSAM-2020}, Zhu used this result once again to develop and sharpen Qi's first double inequality discovered in~\cite{CAM-D-18-00067.tex, Rem-Bern-Ratio-Ineq.tex}.
\item
In 2015, Adell--Lekuona~\cite{Adell-Lekuona-JNT-2015} and Alzer--Kwong~\cite{Alzer-Kwong-JNT-2015} proved the concavity of $\eta(t)$ on $(0,\infty)$.
\item
In~\cite{Hu-Kim-JMAA-2019}, Hu and Kim discovered many families of linear recurrent relations and convolution identities of $\lambda(2j)$ for $j\in\mathbb{N}$.
\item
In~\cite{Yang-Tian-JCAM-2020}, Yang and Tian verified that the function
\begin{equation*}
\frac{1}{2^t}\frac{\zeta(t)-2^{-q}\zeta(t+q)}{\zeta(t)-\zeta(t+q)}
\end{equation*}
is increasing from $(1,\infty)$ onto $\bigl(\frac{1}{2},1\bigr)$. By this, Yang and Tian~\cite{Yang-Tian-JCAM-2020} extended and sharpened Qi's first double inequality established in~\cite{CAM-D-18-00067.tex, Rem-Bern-Ratio-Ineq.tex}.
\item
In~\cite{MIA-9509.tex}, Qi and his two coauthors reviewed and surveyed some results developed in recent years about several functions involving the Riemann zeta function $\zeta(x)$ and about several sequences involving the ratio $\frac{|B_{2(j+1)}|}{|B_{2j}|}$.
\end{enumerate}
\par
As a continuation of the papers~\cite{Mon-Eta-Ratio.tex, dema-D-22-00076.tex, HJMS1099250.tex, RCSM-D-21-00302.tex}, in what follows, we will consider the following two problems.
\begin{enumerate}
\item
Define the extended binomial coefficient $\binom{z}{w}$ by
\begin{equation*}
\binom{z}{w}=
\begin{dcases}
\frac{\Gamma(z+1)}{\Gamma(w+1)\Gamma(z-w+1)}, & z\not\in\mathbb{N}_-,\quad w,z-w\not\in\mathbb{N}_-\\
0, & z\not\in\mathbb{N}_-,\quad w\in\mathbb{N}_- \text{ or } z-w\in\mathbb{N}_-\\
\frac{\langle z\rangle_w}{w!},& z\in\mathbb{N}_-, \quad w\in\mathbb{N}_0\\
\frac{\langle z\rangle_{z-w}}{(z-w)!}, & z,w\in\mathbb{N}_-, \quad z-w\in\mathbb{N}_0\\
0, & z,w\in\mathbb{N}_-, \quad z-w\in\mathbb{N}_-\\
\infty, & z\in\mathbb{N}_-, \quad w\not\in\mathbb{Z}
\end{dcases}
\end{equation*}
in terms of the gamma function $\Gamma(z)$ and the falling factorial

\begin{equation*}
\langle\nu\rangle_m=\prod_{j=0}^{m-1}(\nu-j)
=
\begin{cases}
\nu(\nu-1)\dotsm(\nu-m+1), & m\in\mathbb{N}\\
1, & m=0
\end{cases}
\end{equation*}
for $\nu\in\mathbb{C}$.
For real number $\rho>0$ and $j\in\mathbb{N}_0$, what about the monotonicity of the function
\begin{equation}\label{zeta-ratio-gamma}
t\mapsto\binom{t+\rho+j}{\rho}\frac{\zeta(t+\rho)}{\zeta(t)}
\end{equation}
on $(1,\infty)$?
\item
What about the convexity of the function $t\mapsto\Gamma(t+j)\zeta(t)$ on $(1,\infty)$ for $j\in\mathbb{N}$?
\end{enumerate}
\par
If $(-1)^{j}g^{(j)}(t)\ge0$ for $j\ge0$ holds on an interval $I\subseteq\mathbb{R}$, then we say that $g(t)$ is a completely monotonic function on $I$; see~\cite[Chapter~XIII]{mpf-1993}, \cite[Chapter~1]{Schilling-Song-Vondracek-2nd}, and~\cite[Chapter~IV]{widder}.
If $(-1)^j[\ln g(t)]^{(j)}\ge0$ for $j\in\mathbb{N}$ holds on $I\subseteq\mathbb{R}$, then we call $g(t)$ a logarithmically completely monotonic function on $I\subseteq\mathbb{R}$; see the papers~\cite{CBerg}, \cite[Definition~1]{absolute-mon-simp.tex}, and~\cite[Definition~1]{compmon2}. If $g^{(2j)}(t)\ge0$ for $j\ge0$ holds on an interval $I\subseteq\mathbb{R}$, then we say that $g(t)$ is an absolutely convex function on $I$; if $(-1)^jg^{(2j)}(t)\ge0$ for $j\ge0$ holds on an interval $I\subseteq\mathbb{R}$, then we say that $g(t)$ is a completely convex function on $I$; see~\cite[p.~375, Definition~3]{mpf-1993}.
\par
Our main results in this paper are as follows:
\begin{enumerate}
\item
the increasing monotonicity of the function defined in~\eqref{zeta-ratio-gamma} are discussed;
\item
the absolute convexity and logarithmic convexity of the function $\Gamma(t+j)\zeta(t)$ are studied;
\item
the increasing monotonicity of a sequence involving the ratio $\frac{|B_{2n+4}|} {|B_{2n+2}|}$ and inequalities for the ratio $\frac{|B_{2n+2}|} {|B_{2n}|}$ are derived.
\end{enumerate}

\section{Preliminaries}
For proving our main results, we prepare necessary lemmas below.

\begin{lem}[{\cite[Lemma~9]{Alice-y-x-conj-one.tex} and~\cite[Remark~7.2]{Ouimet-LCM-BKMS.tex}}]\label{monotonicity-rule-qi}
Let $U(y)$, $V(y)$, $W(y,t)$ be integrable in $y\in(a,b)\subseteq\mathbb{R}$ and satisfy $V(y)>0$ and $W(y,t)>0$.
\begin{enumerate}
\item
If the ratios $\frac{\partial W(y,t)/\partial t}{W(y,t)}$ and $\frac{U(y)}{V(y)}$ are both increasing or both decreasing in $y\in(a,b)\subseteq\mathbb{R}$, then the ratio
\begin{equation*}
R(t)=\frac{\int_{a}^{b}W(y,t)U(y)\td y}{\int_{a}^{b}W(y,t)V(y)\td y}
\end{equation*}
is increasing in $t$.
\item
If one of the ratios $\frac{\partial W(y,t)/\partial t}{W(y,t)}$ and $\frac{U(y)}{V(y)}$ is increasing and another one of them is decreasing in $y\in(a,b)\subseteq\mathbb{R}$, then the ratio $R(t)$ is decreasing in $t$.
\end{enumerate}
\end{lem}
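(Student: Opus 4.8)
The plan is to differentiate the ratio $R(x)$ directly and to reduce the sign of $R'(x)$ to a Chebyshev-type double integral. Put $\phi(t,x)=\dfrac{\partial W(t,x)/\partial x}{W(t,x)}$, so that $\dfrac{\partial}{\partial x}W(t,x)=\phi(t,x)W(t,x)$. Differentiating under the integral sign, which is the one routine technical point that must be justified from the integrability hypotheses together with a standard domination argument, gives
\begin{equation*}
R'(x)=\frac{\left(\int_a^b\phi WU\,\td t\right)\left(\int_a^b WV\,\td t\right)-\left(\int_a^b WU\,\td t\right)\left(\int_a^b\phi WV\,\td t\right)}{\left(\int_a^b W(t,x)V(t)\,\td t\right)^2},
\end{equation*}
where every integrand is understood to be evaluated at $(t,x)$. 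Since $V>0$ and $W>0$, the denominator is positive, so the sign of $R'(x)$ coincides with the sign of its numerator $N(x)$.

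Next I would rewrite $N(x)$ as a symmetric double integral. Expanding the two products of single integrals as iterated integrals in variables $s$ and $t$ yields
\begin{equation*}
N(x)=\int_a^b\int_a^b\bigl[\phi(s,x)-\phi(t,x)\bigr]W(s,x)W(t,x)U(s)V(t)\,\td s\,\td t;
\end{equation*}
interchanging the names of $s$ and $t$ and averaging the two resulting expressions gives
\begin{equation*}
N(x)=\frac12\int_a^b\int_a^b\bigl[\phi(s,x)-\phi(t,x)\bigr]W(s,x)W(t,x)\bigl[U(s)V(t)-U(t)V(s)\bigr]\,\td s\,\td t.
\end{equation*}
Because $V>0$, one has $U(s)V(t)-U(t)V(s)=V(s)V(t)\left[\dfrac{U(s)}{V(s)}-\dfrac{U(t)}{V(t)}\right]$, so the integrand becomes
\begin{equation*}
\frac12\,W(s,x)W(t,x)V(s)V(t)\bigl[\phi(s,x)-\phi(t,x)\bigr]\left[\frac{U(s)}{V(s)}-\frac{U(t)}{V(t)}\right],
\end{equation*}
in which the four factors $W(s,x)$, $W(t,x)$, $V(s)$, $V(t)$ are strictly positive.

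Finally I would read off the conclusion from the sign of the remaining product of two differences. If $\phi(\cdot,x)$ and $\frac{U}{V}$ are both increasing or both decreasing in $t\in(a,b)$, then for every pair $(s,t)$ the differences $\phi(s,x)-\phi(t,x)$ and $\frac{U(s)}{V(s)}-\frac{U(t)}{V(t)}$ carry the same sign, hence their product is nonnegative, whence $N(x)\ge0$ and $R$ is increasing in $x$; if one of the two ratios is increasing while the other is decreasing, that product is nonpositive, so $N(x)\le0$ and $R$ is decreasing in $x$. The only real obstacle I anticipate is making rigorous the differentiation of $R$ under the integral sign and the use of Fubini's theorem in passing to the double integral; both follow from the assumed integrability of $U$, $V>0$, $W>0$, and $\partial W/\partial x$ on $(a,b)$ via dominated convergence, after which the argument reduces to the two short manipulations displayed above.
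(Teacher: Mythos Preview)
Your argument is correct and is in fact the standard Chebyshev-type symmetrization proof of this monotonicity rule. Note, however, that the paper does not supply its own proof of this lemma: it is quoted from~\cite[Lemma~2.8 and Remark~6.3]{Alice-y-x-conj-one.tex} and~\cite[Remark~7.2]{Ouimet-LCM-BKMS.tex} and used as a black box, so there is nothing in the present paper to compare your proof against. The proof in those references follows exactly the route you describe: compute $R'(x)$ via the quotient rule, express the numerator as a double integral, symmetrize in $(s,t)$, and read off the sign from the product $\bigl[\phi(s,x)-\phi(t,x)\bigr]\bigl[\tfrac{U(s)}{V(s)}-\tfrac{U(t)}{V(t)}\bigr]$. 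Your only stated caveat, about justifying differentiation under the integral and Fubini, is genuine but is also glossed over in the cited sources; as you say, it is routine under mild additional regularity on $W$ and $\partial W/\partial x$, which in the application here ($W(t,x)=t^x$ on $(0,\infty)$ against weights decaying like $e^{-t}$) is immediate.
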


\begin{lem}[{\cite[Theorem~1.3]{Eight-Identy-More.tex}, \cite[Theorems~2.1 and~2.2]{exp-derivative-sum-Combined.tex}, \cite[Theorems~3.1 and~3.2]{CAM-D-13-01430-Xu-Cen}}] \label{Xu-Cen-rew-thm}
Let $\varphi\ne0$ and $\phi\ne0$ be real constants and $j\in\mathbb{N}$. If $\varphi>0$ and $x\ne-\frac{\ln\varphi}\phi$ or if $\varphi<0$ and $x\in\mathbb{R}$, then
\begin{equation}\label{id-gen-new-form1}
\frac{\operatorname{d}^j}{\td x^j}\biggl(\frac1{\varphi \te^{\phi x}-1}\biggr)
=(-1)^j\phi^j\sum_{q=1}^{j+1}{(q-1)!S(j+1,q)}\biggl(\frac1{\varphi \te^{\phi x}-1}\biggr)^q,
\end{equation}
where
\begin{equation*}
S(j,q)=\frac1{q!}\sum_{\ell=1}^q(-1)^{q-\ell}\binom{q}{\ell}\ell^{j}, \quad 1\le q\le j
\end{equation*}
stands for the second kind Stirling numbers.
\end{lem}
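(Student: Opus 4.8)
The plan is to reduce the $k$-fold differentiation in~\eqref{id-gen-new-form1} to a single first-order autonomous differential equation and then to carry out an induction on $k$ whose inductive step is exactly the recurrence for the Stirling numbers of the second kind.

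First I would put $u=u(t)=\frac{1}{\vartheta e^{\theta t}-1}$, which is well defined under the stated restrictions on $\vartheta$, $\theta$, and $t$. The decisive algebraic observation is that $\vartheta e^{\theta t}=1+\frac1u$, so that
\begin{equation*}
u'=-\frac{\theta\,\vartheta e^{\theta t}}{(\vartheta e^{\theta t}-1)^2}
=-\theta\,\frac{1+(\vartheta e^{\theta t}-1)}{(\vartheta e^{\theta t}-1)^2}
=-\theta\bigl(u^2+u\bigr);
\end{equation*}
in other words $u$ satisfies the Riccati-type equation $u'=-\theta(u+u^2)$. Because its right-hand side is a polynomial in $u$ with zero constant term, each derivative $\frac{\td^k u}{\td t^k}$ is again such a polynomial, and only its coefficients remain to be determined.

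Next I would prove, by induction on $k\in\mathbb{N}$, that
\begin{equation*}
\frac{\td^k u}{\td t^k}=(-\theta)^k\sum_{p=1}^{k+1}(p-1)!\,S(k+1,p)\,u^p,
\end{equation*}
which, since $(-1)^k\theta^k=(-\theta)^k$, is~\eqref{id-gen-new-form1} after $u$ is replaced by $\frac{1}{\vartheta e^{\theta t}-1}$. The base case $k=1$ follows from $S(2,1)=S(2,2)=1$ and the displayed formula for $u'$. For the inductive step, differentiating the inductive hypothesis and inserting $u'=-\theta(u+u^2)$ yields
\begin{equation*}
\frac{\td^{k+1}u}{\td t^{k+1}}=(-\theta)^{k+1}\sum_{p=1}^{k+1}p!\,S(k+1,p)\bigl(u^p+u^{p+1}\bigr),
\end{equation*}
and the coefficient of $u^q$ in this sum equals $q!\,S(k+1,q)+(q-1)!\,S(k+1,q-1)=(q-1)!\bigl(q\,S(k+1,q)+S(k+1,q-1)\bigr)$. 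The recurrence $S(k+2,q)=q\,S(k+1,q)+S(k+1,q-1)$ rewrites this as $(q-1)!\,S(k+2,q)$, which is exactly the coefficient required by the formula with $k$ replaced by $k+1$; the boundary indices $q=1$ and $q=k+2$ are handled separately, using $S(m,1)=1$ and $S(m,m)=1$.

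I do not foresee a genuine obstacle: the only step that needs an idea is the rewriting $\frac{\vartheta e^{\theta t}}{(\vartheta e^{\theta t}-1)^2}=u+u^2$, which converts repeated differentiation into a polynomial recursion, after which the Stirling recurrence does all the bookkeeping automatically. One could instead expand the composition $w\mapsto\frac{1}{w-1}$ evaluated at $w=\vartheta e^{\theta t}$ by the iterated chain rule (Bell polynomials), or compare exponential generating functions, but those routes are longer, and the autonomous-equation induction above is the one I would write out in full.
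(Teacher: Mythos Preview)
Your argument is correct. The substitution $u=\frac{1}{\vartheta e^{\theta t}-1}$ does satisfy $u'=-\theta(u+u^2)$, the induction goes through exactly as you describe, and the coefficient computation at the inductive step is precisely the recurrence $S(k+2,q)=q\,S(k+1,q)+S(k+1,q-1)$; the boundary cases $q=1$ and $q=k+2$ are handled by $S(m,1)=S(m,m)=1$, as you note.

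As for comparison with the paper: there is nothing to compare. Lemma~\ref{Xu-Cen-rew-thm} is quoted from the references~\cite{Eight-Identy-More.tex, exp-derivative-sum-Combined.tex, CAM-D-13-01430-Xu-Cen} and is stated without proof in the present paper, so your write-up supplies an argument where the paper offers none. The route you chose---turning the problem into an autonomous first-order ODE in $u$ and letting the Stirling recurrence do the bookkeeping---is the standard and cleanest one; the alternatives you mention (Fa\`a di Bruno/Bell polynomials or generating functions) would also work but are indeed heavier.
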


About the second kind Stirling numbers $S(m,j)$ for $m\ge j\ge0$, please see~\cite[Section~24.1.4]{abram}, \cite[Section~1.3]{Temme-96-book}, and the literature~\cite{Bell-Stirling-Lah-simp.tex, MIA-4666.tex, Quaintance-Gould-Stirling-B}.

\begin{lem}[{\cite{Dubourdieu96} and~\cite[p.~395]{haerc1}}]\label{CM-non-zero-lem}
If $G(t)$ is not identically zero and is completely monotonic on $(0,\infty)$, then $G^{(j)}(t)$ for $j\in\mathbb{N}_0$ is impossibly equal to $0$ on $(0,\infty)$.
\end{lem}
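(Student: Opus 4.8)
The plan is to establish the substantive core of the lemma, namely that \emph{a completely monotonic function on $(0,\infty)$ that vanishes at a single point vanishes identically}, and to deduce the stated conclusion from it. First I would record the elementary closure property that, whenever $q$ is completely monotonic on $(0,\infty)$, so is $g_{j}:=(-1)^{j}q^{(j)}$ for every $j\ge0$, because $(-1)^{n}g_{j}^{(n)}=(-1)^{n+j}q^{(n+j)}\ge0$. Granting the core claim, Lemma~\ref{CM-non-zero-lem} follows at once: if $q$ is completely monotonic and not identically zero, the claim forbids $q$ from having any zero in $(0,\infty)$, and since $q\ge0$ this gives $q>0$ there; and whenever $(-1)^{k}q^{(k)}$ is not identically zero it is a completely monotonic function without a zero, so $q^{(k)}$ cannot vanish on $(0,\infty)$ either.

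To prove the core claim, suppose $g$ is completely monotonic on $(0,\infty)$ with $g(x_{0})=0$ for some $x_{0}>0$. \textbf{Step 1.} From $g'=-g_{1}\le0$, $g$ is nonincreasing, so $g\ge0$ together with $g(x_{0})=0$ forces $g\equiv0$ on $[x_{0},\infty)$; by continuity $g^{(n)}(x_{0})=0$ for all $n\ge0$. \textbf{Step 2.} Integrating repeatedly and using Step 1 yields the identity
\begin{equation*}
g(y)=\int_{y}^{x_{0}}g_{n}(s)\,\frac{(s-y)^{n-1}}{(n-1)!}\,\td s,\qquad 0<y<x_{0},\ n\in\mathbb{N},
\end{equation*}
whence, since $g_{n}$ is completely monotonic and hence nonincreasing, $g(y)\ge g_{n}(z)\,\dfrac{(z-y)^{n}}{n!}$ for $y<z\le x_{0}$, that is, $g_{n}(z)\le \dfrac{n!\,g(y)}{(z-y)^{n}}$. \textbf{Step 3.} Taylor's formula with Lagrange remainder at $x_{0}$, whose polynomial part vanishes by Step 1, gives $g(y')=\dfrac{g_{n}(\xi)}{n!}\,(x_{0}-y')^{n}$ for $y'\in(0,x_{0})$ with some $\xi\in(y',x_{0})$; feeding in the bound of Step 2 at $z=\xi$ together with an auxiliary point $y\in(0,y')$ produces
\begin{equation*}
0\le g(y')\le g(y)\left(\frac{x_{0}-y'}{y'-y}\right)^{n}\qquad\text{for every }n\in\mathbb{N}.
\end{equation*}

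I expect the main obstacle to be that this estimate pins down $g(y')=0$ only when $\frac{x_{0}-y'}{y'-y}<1$ can be arranged, and, since one needs $0<y<y'$, this is possible exactly for $y'\in(x_{0}/2,x_{0})$; so a single pass of Steps 1--3 only yields $g\equiv0$ on $(x_{0}/2,\infty)$. The remedy is a halving bootstrap: $g$ now vanishes at $x_{0}/2$ (by continuity), so the same three steps give $g\equiv0$ on $(x_{0}/4,\infty)$, and inductively $g\equiv0$ on $(x_{0}/2^{m},\infty)$ for every $m\in\mathbb{N}$; letting $m\to\infty$ gives $g\equiv0$ on $(0,\infty)$, which proves the core claim and hence the lemma. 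Alternatively one could bypass Steps 2--3 by appealing to Bernstein's theorem to write $g(x)=\int_{[0,\infty)}e^{-xt}\,\td\nu(t)$ for a nonnegative measure $\nu$, as in \cite[Chapter~1]{Schilling-Song-Vondracek-2nd} or \cite[Chapter~IV]{widder}, since then $g(x_{0})=0$ immediately forces $\nu=0$; but the route above keeps the argument self-contained.
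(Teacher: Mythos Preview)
The paper gives no proof of this lemma; it simply quotes the result from Dubourdieu and van~Haeringen. Your argument is correct: the core claim (a completely monotonic function vanishing at one point vanishes identically) is established cleanly by the Taylor-remainder bound in Steps~2--3 together with the halving bootstrap, and the reduction to the derivatives via $g_{j}=(-1)^{j}q^{(j)}$ is sound. The Bernstein-representation route you mention at the end is in fact the one-line proof closest in spirit to the cited sources, so your elementary Steps~1--3 trade brevity for self-containment. One minor point worth making explicit: your caveat ``whenever $(-1)^{k}q^{(k)}$ is not identically zero'' is genuinely needed, since the lemma as phrased fails for a nonzero constant $q$; a short side remark that the only completely monotonic polynomials on $(0,\infty)$ are nonnegative constants would pin down this sole exceptional case and make the deduction airtight.
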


\begin{lem}\label{point-formula-thm}
When $m\in\mathbb{N}_0$, the formula
\begin{equation}\label{point-formula}
\Gamma(t+m)\zeta(t)=(-1)^m\int_{0}^{\infty}\biggl(\frac{1}{\te^x-1}\biggr)^{(m)}x^{t+m-1}\td x
\end{equation}
is valid for $\Re(t)>1$.
\end{lem}

\begin{proof}
For $m=0$, the formula~\eqref{point-formula} is just the one~\eqref{zeta-int-representation}.
\par
For $m=1$ and $\Re(t)>1$, we have
\begin{gather*}
\Gamma(t+1)\zeta(t)=t\Gamma(t)\zeta(t)
=t\int_{0}^{\infty}\frac{1}{\te^x-1}x^{t-1}\td x
=\int_{0}^{\infty}\frac{1}{\te^x-1}\frac{\td x^{t}}{\td x}\td x\\
=\frac{x^{t}}{\te^x-1}\bigg|_{x=0}^{x=\infty}-\int_{0}^{\infty}\biggl(\frac{1}{\te^x-1}\biggr)'x^{t}\td x
=-\int_{0}^{\infty}\biggl(\frac{1}{\te^x-1}\biggr)'x^{t}\td x.
\end{gather*}
\par
Assume that the formula~\eqref{point-formula} is valid for some $m\in\mathbb{N}_0$ and $\Re(t)>1$. Then
\begin{align*}
\Gamma(t+m+1)\zeta(t)&=(t+m)\Gamma(t+m)\zeta(t)\\
&=(-1)^m(t+m)\int_{0}^{\infty}\biggl(\frac{1}{\te^x-1}\biggr)^{(m)}x^{t+m-1}\td x\\
&=(-1)^m\int_{0}^{\infty}\biggl(\frac{1}{\te^x-1}\biggr)^{(m)}\frac{\td x^{t+m}}{\td x}\td x\\
&=(-1)^m\biggl(\biggl[\biggl(\frac{1}{\te^x-1}\biggr)^{(m)}x^{t+m}\biggr]\bigg|_{x=0}^{x=\infty} -\int_{0}^{\infty}\biggl(\frac{1}{\te^x-1}\biggr)^{(m+1)}x^{t+m}\td x\biggr]\biggr)\\
&=(-1)^{m+1}\int_{0}^{\infty}\biggl(\frac{1}{\te^x-1}\biggr)^{(m+1)}x^{t+m}\td x,
\end{align*}
where we used the fact that, by virtue of Lemma~\ref{Xu-Cen-rew-thm} for $\phi=\varphi=1$,
\begin{gather*}
\begin{aligned}
\biggl[\biggl(\frac{1}{\te^x-1}\biggr)^{(m)}x^{t+m}\biggr]\bigg|_{x=0}^{x=\infty}
&=(-1)^{m} \Biggl[\Biggl(\sum_{q=1}^{m+1}{(q-1)!S(m+1,q)} \biggl(\frac1{\te^x-1}\biggr)^q\Biggr) x^{t+m}\Biggr]\Bigg|_{x=0}^{x=\infty}\\
&=(-1)^{m}\sum_{q=1}^{m+1}{(q-1)!S(m+1,q)} \biggl[\biggl(\frac1{\te^x-1}\biggr)^qx^{t+m}\biggr]\bigg|_{x=0}^{x=\infty}
\end{aligned}\\
\begin{aligned}
&=(-1)^{m}\sum_{q=1}^{m+1}{(q-1)!S(m+1,q)} \biggl(\lim_{x\to\infty}\biggl[\biggl(\frac1{\te^x-1}\biggr)^qx^{t+m}\biggr] -\lim_{x\to0^+}\biggl[\biggl(\frac{x}{\te^x-1}\biggr)^qx^{t+m-q}\biggr]\biggr)\\
&=0
\end{aligned}
\end{gather*}
for $\Re(t)>1$.
Consequently, by induction, we are sure that the formula~\eqref{point-formula} is valid for all $m\in\mathbb{N}_0$ and $\Re(t)>1$. The proof of Lemma~\ref{point-formula-thm} is complete.
\end{proof}

\begin{lem}[{\cite[Theorem~1]{Exp-Diff-Ratio-Wei-Guo.tex}}]\label{F-0-G-0-lcm-thm}
For $j\in\mathbb{N}_0$, the function
\begin{equation}\label{exp=two=deriv}
\mathcal{G}_j(t)=(-1)^j\biggl(\frac{1}{\te^t-1}\biggr)^{(j)}
\end{equation}
is completely monotonic on $(0,\infty)$.
In particular, the function $\mathcal{G}_0(t)$ is logarithmically completely monotonic on $(0,\infty)$.
\end{lem}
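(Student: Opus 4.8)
The plan is to start from the geometric-series expansion
\begin{equation*}
\frac{1}{e^t-1}=\sum_{n=1}^{\infty}e^{-nt},\qquad t>0,
\end{equation*}
and to combine it with three elementary facts about complete monotonicity on $(0,\infty)$: each exponential $t\mapsto e^{-nt}$ with $n\in\mathbb{N}_0$ is completely monotonic; a locally uniformly convergent series of completely monotonic functions with nonnegative coefficients is again completely monotonic; and if $q$ is completely monotonic then so is $(-1)^{k}q^{(k)}$ for every $k\in\mathbb{N}_0$, because $(-1)^{m}\bigl[(-1)^{k}q^{(k)}\bigr]^{(m)}=(-1)^{k+m}q^{(k+m)}\ge0$ for all $m\in\mathbb{N}_0$.

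For the first assertion I would differentiate the displayed series term by term, which is legitimate since $\sum_{n\ge1}n^{k}e^{-nt}$ converges uniformly on every half-line $[\delta,\infty)$ with $\delta>0$; this yields
\begin{equation*}
\biggl(\frac{1}{e^t-1}\biggr)^{(k)}=(-1)^{k}\sum_{n=1}^{\infty}n^{k}e^{-nt},\qquad\text{hence}\qquad \mathcal{F}_k(t)=\sum_{n=1}^{\infty}n^{k}e^{-nt},
\end{equation*}
a locally uniform limit of partial sums each of which is a nonnegative combination of completely monotonic functions; therefore $\mathcal{F}_k$ is completely monotonic on $(0,\infty)$. A second, essentially equivalent route is available once $\mathcal{F}_0$ is known to be completely monotonic: Lemma~\ref{Xu-Cen-rew-thm} with $\vartheta=\theta=1$ gives
\begin{equation*}
\mathcal{F}_k(t)=(-1)^{k}\biggl(\frac{1}{e^t-1}\biggr)^{(k)}=\sum_{p=1}^{k+1}(p-1)!\,S(k+1,p)\biggl(\frac{1}{e^t-1}\biggr)^{p},
\end{equation*}
and since each power $\mathcal{F}_0^{\,p}$ is completely monotonic (a finite product of completely monotonic functions) while $(p-1)!\,S(k+1,p)\ge0$, the function $\mathcal{F}_k$ is again a nonnegative combination of completely monotonic functions.

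For the stronger conclusion about $\mathcal{F}_0$ I would set $L(t)=\ln\mathcal{F}_0(t)=-\ln\bigl(e^t-1\bigr)$ and compute
\begin{equation*}
L'(t)=-\frac{e^t}{e^t-1}=-\frac{1}{1-e^{-t}},\qquad\text{so that}\qquad -L'(t)=\frac{1}{1-e^{-t}}=\sum_{n=0}^{\infty}e^{-nt}.
\end{equation*}
By the reasoning recalled above, $-L'$ is completely monotonic on $(0,\infty)$, i.e.\ $(-1)^{m}(-L')^{(m)}(t)\ge0$ for all $m\in\mathbb{N}_0$; re-indexing, this says $(-1)^{k}L^{(k)}(t)\ge0$ for every $k\in\mathbb{N}$, which is exactly the definition of logarithmic complete monotonicity of $\mathcal{F}_0$ on $(0,\infty)$. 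This in particular re-proves that $\mathcal{F}_0$ is completely monotonic, since logarithmic complete monotonicity implies complete monotonicity, and hence via the third fact above that every $\mathcal{F}_k=(-1)^{k}\mathcal{F}_0^{(k)}$ is completely monotonic.

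I do not anticipate a genuine obstacle: the single point needing care is justifying the term-by-term differentiation of the geometric series, which the local uniform convergence of $\sum_{n\ge1}n^{k}e^{-nt}$ supplies, after which complete monotonicity is transparent because each summand $n^{k}e^{-nt}$ is itself completely monotonic and enters with a nonnegative coefficient. The alternative via Lemma~\ref{Xu-Cen-rew-thm} is equally short and stays within the toolkit already developed in the paper, at the modest cost of separately recording that powers of a completely monotonic function are completely monotonic.
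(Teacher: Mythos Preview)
Your argument is correct, and in fact it supplies something the paper does not: the paper states Lemma~\ref{F-0-G-0-lcm-thm} without proof, importing it wholesale from~\cite[Theorem~1]{Exp-Diff-Ratio-Wei-Guo.tex}. So there is no ``paper's own proof'' to compare against here; you have produced a self-contained demonstration where the authors chose to cite.

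Both routes you sketch are sound. The geometric-series expansion $\mathcal{F}_k(t)=\sum_{n\ge1}n^{k}e^{-nt}$ is the most direct: it exhibits $\mathcal{F}_k$ as a Laplace--Stieltjes transform of a nonnegative discrete measure, which by Bernstein's theorem is equivalent to complete monotonicity, and your justification of term-by-term differentiation via local uniform convergence is adequate. The alternative through Lemma~\ref{Xu-Cen-rew-thm}, writing $\mathcal{F}_k$ as a nonnegative combination of powers $\mathcal{F}_0^{\,p}$, is closer in spirit to how the paper actually \emph{uses} these functions elsewhere (e.g.\ in the proof of Theorem~\ref{Bernou-ratio-frac-seq-thm} and Proposition~\ref{ratio-cm-thm}), and has the merit of reusing tools already on the table. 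Your treatment of the logarithmic complete monotonicity of $\mathcal{F}_0$ via $-L'(t)=\sum_{n\ge0}e^{-nt}$ is likewise clean and correct.

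One small stylistic remark: when you assert that ``a locally uniformly convergent series of completely monotonic functions with nonnegative coefficients is again completely monotonic,'' strictly speaking you need local uniform convergence of every differentiated series, not just the original one; you do supply this (noting $\sum_{n\ge1}n^{k}e^{-nt}$ converges uniformly on $[\delta,\infty)$ for every $k$), so the logic is intact, but it would read more precisely if the hypothesis were stated that way.
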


\section{Monotonicity result and absolute convexity}

Our main results and their proofs are as follows.

\begin{thm}\label{Bernou-ratio-frac-seq-thm}
Let $\rho>0$ be a scalar and let $j\in\mathbb{N}_0$. Then 
\begin{enumerate}
\item
the function in~\eqref{zeta-ratio-gamma} for given $j\in\mathbb{N}_0$ is increasing from $(1,\infty)$ onto $(0,\infty)$;
\item
the function $\Gamma(t+j)\zeta(t)$ for given $j\in\mathbb{N}_0$ is absolutely convex in $t\in(1,\infty)$;
\item
the function $\Gamma(t+j)\zeta(t)$ for given $j\in\mathbb{N}$ is logarithmically convex in $t\in(1,\infty)$.
\end{enumerate}
\end{thm}

\begin{proof}
Making use of the formula~\eqref{point-formula} in Lemma~\ref{point-formula-thm}, we obtain
\begin{equation*}
\frac{\Gamma(t+\rho+1)\zeta(t+\rho)}{\Gamma(t+1)\zeta(t)}
=\frac{-\int_{0}^{\infty}\bigl(\frac{1}{\te^x-1}\bigr)'x^{t+\rho}\td x} {-\int_{0}^{\infty}\bigl(\frac{1}{\te^x-1}\bigr)'x^t\td x}
=\frac{\int_{0}^{\infty}\frac{\te^x}{(\te^x-1)^2}x^{t+\rho}\td x} {\int_{0}^{\infty}\frac{\te^x}{(\te^x-1)^2}x^t\td x}, \quad \Re(t)>1.
\end{equation*}
Applying Lemma~\ref{monotonicity-rule-qi} to
\begin{equation*}
U(x)=\frac{\te^x x^\rho}{(\te^x-1)^2}, \quad V(x)=\frac{\te^x}{(\te^x-1)^2}>0, \quad W(x,t)=x^t>0,
\end{equation*}
and $(a,b)=(0,\infty)$, making use of the facts that both $\frac{U(x)}{V(x)}=x^\rho$ and
\begin{equation}\label{W(x-t)-partial=ln}
\frac{\partial W(x,t)/\partial t}{W(x,t)}
=\ln x
\end{equation}
are increasing on $(0,\infty)$, we conclude that the ratio
\begin{equation*}
\frac{\int_{0}^{\infty}\frac{\te^x}{(\te^x-1)^2}x^{t+\rho}\td x} {\int_{0}^{\infty}\frac{\te^x}{(\te^x-1)^2}x^t\td x}
=\frac{\Gamma(t+\rho+1)\zeta(t+\rho)}{\Gamma(t+1)\zeta(t)}
=\Gamma(\rho+1)\binom{t+\rho}{\rho}\frac{\zeta(t+\rho)}{\zeta(t)}
\end{equation*}
is increasing in $t\in(1,\infty)$. Consequently, the function in~\eqref{zeta-ratio-gamma} for $j=0$ is increasing in $t\in(1,\infty)$.
\par
Once making use of the formula~\eqref{point-formula} in Lemma~\ref{point-formula-thm}, for $j,m\ge2$, we obtain
\begin{equation*}
\frac{\Gamma(t+\rho+j)\zeta(t+\rho)}{\Gamma(t+m)\zeta(t)}
=(-1)^{j-m} \frac{\int_{0}^{\infty}\bigl(\frac{1}{\te^x-1}\bigr)^{(j)}x^{t+\rho+j-1}\td x} {\int_{0}^{\infty}\bigl(\frac{1}{\te^x-1}\bigr)^{(m)}x^{t+m-1}\td x}
=\frac{\int_{0}^{\infty}\mathcal{G}_j(x)x^{t+\rho+j-1}\td x} {\int_{0}^{\infty}\mathcal{G}_m(x)x^{t+m-1}\td x}
\end{equation*}
for $\Re(t)\ge1$.
By Lemmas~\ref{CM-non-zero-lem} and~\ref{F-0-G-0-lcm-thm}, we see that the functions $\mathcal{G}_k(x)$ for $k\ge0$ are all positive on $(0,\infty)$.
Once applying Lemma~\ref{monotonicity-rule-qi} to
\begin{equation*}
U(x)=\mathcal{G}_j(x)x^{\rho+j}, \quad V(x)=\mathcal{G}_m(x)x^m>0, \quad W(x,t)=x^{t-1}>0,
\end{equation*}
and $(a,b)=(0,\infty)$, since $\frac{U(x)}{V(x)}=\frac{\mathcal{G}_j(x)}{\mathcal{G}_m(x)}x^{j-m+\rho}$ for $m=j$ and the partial derivative in~\eqref{W(x-t)-partial=ln} are both increasing on $(0,\infty)$, we acquire that the ratio
\begin{equation*}
\frac{\Gamma(t+\rho+j)\zeta(t+\rho)}{\Gamma(t+j)\zeta(t)}
=\Gamma(\rho+1)\binom{t+\rho+j-1}{\rho}\frac{\zeta(t+\rho)}{\zeta(t)}
=\frac{\int_{0}^{\infty}\mathcal{G}_j(x)x^{t+\rho+j-1}\td x} {\int_{0}^{\infty}\mathcal{G}_j(x)x^{t+j-1}\td x}
\end{equation*}
for $j\ge2$ and $\rho>0$ is increasing in $t\in(1,\infty)$. Consequently, the function in~\eqref{zeta-ratio-gamma} for $j\ge1$ and $\rho>0$ is increasing in $t\in(1,\infty)$.
\par
Because the ratio $\frac{\Gamma(t+\rho+j)\zeta(t+\rho)}{\Gamma(t+j)\zeta(t)}$ for given $j\in\mathbb{N}$ is increasing in $t\in(1,\infty)$, its derivative
\begin{equation*}
\biggl[\frac{\Gamma(t+\rho+j)\zeta(t+\rho)}{\Gamma(t+j)\zeta(t)}\biggr]'
=\frac{[\Gamma(t+\rho+j)\zeta(t+\rho)]'[\Gamma(t+j)\zeta(t)]
-[\Gamma(t+\rho+j)\zeta(t+\rho)] [\Gamma(t+j)\zeta(t)]'}{[\Gamma(t+j)\zeta(t)]^2}
\end{equation*}
is nonnegative in $t\in(1,\infty)$. This means that
\begin{equation*}
\frac{[\Gamma(t+\rho+j)\zeta(t+\rho)]'}{\Gamma(t+\rho+j)\zeta(t+\rho)}
\ge\frac{[\Gamma(t+j)\zeta(t)]'}{[\Gamma(t+j)\zeta(t)]},\quad t\in(1,\infty),
\end{equation*}
that is, the logarithmic derivative
\begin{equation*}
(\ln[\Gamma(t+j)\zeta(t)])'=\frac{[\Gamma(t+j)\zeta(t)]'}{[\Gamma(t+j)\zeta(t)]}
\end{equation*}
is increasing in $t\in(1,\infty)$. Consequently, for given $j\in\mathbb{N}$, the function $\Gamma(t+j)\zeta(t)$ is logarithmically convex in $t\in(1,\infty)$.
\par
Differentiating $2k$ times for $k\in\mathbb{N}_0$ with respect to $t$ on both sides of~\eqref{point-formula} yields
\begin{equation*}
[\Gamma(t+m)\zeta(t)]^{(2k)}=\int_{0}^{\infty}(-1)^m\biggl(\frac{1}{\te^x-1}\biggr)^{(m)}x^{t+m-1}(\ln x)^{2k}\td x,
\end{equation*}
where we used the dominated convergence theorem in real and functional analysis~\cite[Theorem~5.8]{Lang-3rd-RF}.
In view of Lemmas~\ref{CM-non-zero-lem} and~\ref{F-0-G-0-lcm-thm}, we see that the completely monotonic function $\mathcal{G}_m(x)=(-1)^m\bigl(\frac{1}{\te^x-1}\bigr)^{(m)}$ for $m\ge0$ is positive on $(0,\infty)$. Hence, the derivatives $[\Gamma(t+m)\zeta(t)]^{(2k)}$ for $k\in\mathbb{N}_0$ are positive, that is, they are absolutely convex on their corresponding intervals.
The proof of Theorem~\ref{Bernou-ratio-frac-seq-thm} is thus complete.
\end{proof}

\section{Monotonicity and bounds for the ratio of Bernoulli numbers}
Applying Theorem~\ref{Bernou-ratio-frac-seq-thm}, we can derive the following results on the ratio $\bigl|\frac{B_{2n+2}} {B_{2n}}\bigr|$.

\begin{thm}\label{app-thm}
The sequences $\bigl|\frac{B_{2n+2}} {B_{2n}}\bigr|$ and
\begin{equation}\label{supple-seq-mon}
\frac{(2n+j+3) (2n+j+4)}{(n+2)(2n+3)} \biggl|\frac{B_{2 n+4}}{B_{2 n+2}}\biggr|, \quad j\in\mathbb{N}
\end{equation}
are both increasing in $n\in\mathbb{N}$.
\par
The inequality
\begin{equation}\label{Qi-lower-B}
\biggl|\frac{B_{2 n+2}}{B_{2 n}}\biggr|
\ge\frac{(n+1)(2n+1)}{21}
\end{equation}
holds for $n\ge2$ and the inequality
\begin{equation}\label{Qi-lower-PPP}
\frac{B_{2 n}B_{2 n+4}}{B_{2 n+2}^2}
\ge \frac{(n+2)(2n+3)}{(n+1)(2n+1)}
\end{equation}
holds for $n\in\mathbb{N}$.
\end{thm}

\begin{proof}
In~\cite[pp.~807--808, Section~23.2]{abram} and~\cite[p.~5, (1.14)]{Temme-96-book}, we find the relation
\begin{equation}\label{Bernoulli-zeta}
B_{2n}=\frac{(-1)^{n+1}2(2n)!}{(2\pi)^{2n}}\zeta(2n), \quad n\in\mathbb{N}.
\end{equation}
Hence, from Theorem~\ref{Bernou-ratio-frac-seq-thm}, we obtain that the sequence $\binom{2n+2}{2}\frac{\zeta(2n+2)}{\zeta(2n)}$ is increasing in $n\in\mathbb{N}$, that is, the sequence
\begin{equation*}
\binom{2n+2}{2}\frac{\frac{(2\pi)^{2n+2}}{(-1)^{n+2}2(2n+2)!}B_{2n+2}} {\frac{(2\pi)^{2n}}{(-1)^{n+1}2(2n)!}B_{2n}}
=2\pi^{2}\biggl|\frac{B_{2n+2}} {B_{2n}}\biggr|
\end{equation*}
is increasing in $n\in\mathbb{N}$, that is, the ratio $\bigl|\frac{B_{2n+2}} {B_{2n}}\bigr|$ is increasing in $n\in\mathbb{N}$.
\par
Employing the relation~\eqref{Bernoulli-zeta} and Theorem~\ref{Bernou-ratio-frac-seq-thm}, we see that the sequence
\begin{align*}
\binom{2n+j+4}{2}\frac{\zeta(2n+4)}{\zeta(2n+2)}
&=\binom{2n+j+4}{2}\frac{\frac{(2\pi)^{2n+4}}{(-1)^{n+3}2(2n+4)!}B_{2n+4}} {\frac{(2\pi)^{2n+2}}{(-1)^{n+2}2(2n+2)!}B_{2n+2}}\\
&=\pi^2\frac{(2n+j+3) (2n+j+4)}{(n+2)(2n+3)} \biggl|\frac{B_{2 n+4}}{B_{2 n+2}}\biggr|
\end{align*}
for fixed $j\in\mathbb{N}$ is increasing in $n\in\mathbb{N}$.
In~\cite[Lemma~1]{JMI-4654.tex}, see also~\cite[Theorem~6]{MIA-9509.tex}, among other things, the sequence
\begin{equation*}
\frac{1} {(2n+1)(n+1)} \frac{2^{2n+2}-1} {2^{2n}-1}\biggl|\frac{B_{2n+2}}{B_{2n}}\biggr|
\end{equation*}
was proved to be increasing in $n\in\mathbb{N}$ and to tend to $\frac{2}{\pi^2}$ as $n\to\infty$. Accordingly, we arrive at
\begin{align*}
&\quad\lim_{n\to\infty}\biggl[\frac{(2n+j+3) (2n+j+4)}{(n+2)(2n+3)} \biggl|\frac{B_{2 n+4}}{B_{2 n+2}}\biggr|\biggr]\\
&=\lim_{n\to\infty}\biggl[\frac{(2n+j+3) (2n+j+4)}{(n+2)(2n+3)} (2n+3)(n+2) \frac{2^{2n+2}-1}{2^{2n+4}-1}\biggr] \\
&\quad\times\lim_{n\to\infty}\biggl[\frac{1} {(2n+3)(n+2)} \frac{2^{2n+4}-1} {2^{2n+2}-1}\biggl|\frac{B_{2 n+4}}{B_{2 n+2}}\biggr|\biggr]\\
&=\frac{2}{\pi^2}\lim_{n\to\infty}\biggl[(2n+j+3) (2n+j+4) \frac{2^{2n+2}-1}{2^{2n+4}-1}\biggr]\\
&=\infty, \quad j\in\mathbb{N}.
\end{align*}
Therefore, we are only able to acquire two one-sided inequalities
\begin{equation*}
\pi^2\frac{(2n+j+3) (2n+j+4)}{(n+2)(2n+3)} \biggl|\frac{B_{2 n+4}}{B_{2 n+2}}\biggr|
\ge \pi^{2}\frac{(j+5)(j+6)}{15}\biggl|\frac{B_6} {B_4}\biggr|
\end{equation*}
and
\begin{equation*}
\pi^2\frac{(2n+j+5) (2n+j+6)}{(n+3)(2n+5)} \biggl|\frac{B_{2 n+6}}{B_{2 n+4}}\biggr|
\ge \pi^2\frac{(2n+j+3) (2n+j+4)}{(n+2)(2n+3)} \biggl|\frac{B_{2 n+4}}{B_{2 n+2}}\biggr|
\end{equation*}
for $j\in\mathbb{N}$ and $n\in\mathbb{N}$. That is,
\begin{equation*}
\biggl|\frac{B_{2 n+4}}{B_{2 n+2}}\biggr|
\ge\frac{(n+2)(2n+3)}{15}\frac{(j+5)(j+6)}{(2n+j+3) (2n+j+4)}\biggl|\frac{B_6} {B_4}\biggr|
\end{equation*}
and
\begin{equation*}
\frac{B_{2 n+2}B_{2 n+6}}{B_{2 n+4}^2}
\ge \frac{(n+3)(2n+5)}{(n+2)(2n+3)}\frac{(2n+j+3) (2n+j+4)}{(2n+j+5)(2n+j+6)}
\end{equation*}
for $j\in\mathbb{N}$ and $n\in\mathbb{N}$. Since the sequences
\begin{equation*}
\frac{(j+5)(j+6)}{(2n+j+3) (2n+j+4)}\quad \text{and}\quad \frac{(2n+j+3) (2n+j+4)}{(2n+j+5)(2n+j+6)}
\end{equation*}
are increasing in $j\in\mathbb{N}$ and tend to $1$ as $j\to\infty$ for all fixed $n\in\mathbb{N}$ respectively, we acquire
\begin{equation*}
\biggl|\frac{B_{2 n+4}}{B_{2 n+2}}\biggr|
\ge\frac{(n+2)(2n+3)}{15}\biggl|\frac{B_6} {B_4}\biggr|
=\frac{(n+2)(2n+3)}{21}
\end{equation*}
and
\begin{equation}\label{fina-ineq}
\frac{B_{2 n+2}B_{2 n+6}}{B_{2 n+4}^2}
\ge \frac{(n+3)(2n+5)}{(n+2)(2n+3)}
\end{equation}
for $n\in\mathbb{N}$. Direct computation shows that the inequality~\eqref{fina-ineq} is also valid for $n=0$. The proof of Theorem~\ref{app-thm} is complete.
\end{proof}

\begin{rem}
The first conclusion in Theorem~\ref{app-thm} that the ratio $\bigl|\frac{B_{2n+2}} {B_{2n}}\bigr|$ is increasing in $n\in\mathbb{N}$ is a recovery of~\cite[Theorem~1.1]{RCSM-D-21-00302.tex}. The increasing monotonicity of the sequence~\eqref{supple-seq-mon} is a recovery of~\cite[Theorem~5]{MIA-9509.tex} and~\cite[Theorems~1.1 and~1.2]{RCSM-D-21-00302.tex}.
\par
The inequality~\eqref{Qi-lower-B} is better than
\begin{equation*}
\biggl|\frac{B_{2n+2}}{B_{2n}}\biggr|\ge\frac{(n+1) (2 n+1)}{30}, \quad n\in\mathbb{N}
\end{equation*}
obtained in~\cite[Remark~1]{MIA-9509.tex}.
\par
The inequality~\eqref{Qi-lower-PPP} is stronger than the logarithmic convexity of the sequence $B_{2n}$ for $n\in\mathbb{N}$, which was derived in~\cite[Theorem~1.1]{RCSM-D-21-00302.tex}.
\end{rem}

\section{A short appendix}

In this section, we slightly strengthen~\cite[Theorem~3]{Exp-Diff-Ratio-Wei-Guo.tex} as follows.

\begin{prop}\label{ratio-cm-thm}
For $j\in\mathbb{N}_0$, the ratio
\begin{equation}\label{ratios-dfn-eq}
\mathfrak{G}_j(x)=\frac{\mathcal{G}_{j+1}(x)}{\mathcal{G}_j(x)}
\end{equation}
is decreasing from $(0,\infty)$ onto $(1,\infty)$, where the function $\mathcal{G}_j(x)$ is defined by~\eqref{exp=two=deriv} in Lemma~\ref{F-0-G-0-lcm-thm}.
\end{prop}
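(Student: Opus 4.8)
The plan is to reduce the whole statement to the elementary series identity
\begin{equation*}
\mathcal{F}_k(t)=\sum_{n=1}^{\infty}n^{k}e^{-nt},\qquad t>0,
\end{equation*}
obtained from $\frac{1}{e^{t}-1}=\sum_{n=1}^{\infty}e^{-nt}$ by differentiating $k$ times (the differentiated series converges uniformly on compact subsets of $(0,\infty)$, so the operation is legitimate; this incidentally re-proves the complete monotonicity in Lemma~\ref{F-0-G-0-lcm-thm}). Since $\mathcal{F}_{k+1}=-\mathcal{F}_k'$, this gives simultaneously
\begin{equation*}
\mathfrak{F}_k(t)=\frac{\mathcal{F}_{k+1}(t)}{\mathcal{F}_k(t)}=-\frac{\mathcal{F}_k'(t)}{\mathcal{F}_k(t)}=-\bigl(\ln\mathcal{F}_k(t)\bigr)'=\frac{\sum_{n=1}^{\infty}n^{k+1}e^{-nt}}{\sum_{n=1}^{\infty}n^{k}e^{-nt}},
\end{equation*}
so that $\mathfrak{F}_k(t)$ is the weighted average $\sum_n n\,w_n(t)$ with weights $w_n(t)=n^{k}e^{-nt}\big/\sum_m m^{k}e^{-mt}$.

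The monotonicity I would get from logarithmic convexity of $\mathcal{F}_k$. For $s\neq u$, applying the Cauchy--Schwarz inequality to $\sum_n\bigl(n^{k/2}e^{-ns/2}\bigr)\bigl(n^{k/2}e^{-nu/2}\bigr)$ yields $\mathcal{F}_k\bigl(\frac{s+u}{2}\bigr)\le\sqrt{\mathcal{F}_k(s)\mathcal{F}_k(u)}$, strictly, because the two sequences are not proportional. Hence $\ln\mathcal{F}_k$ is midpoint convex and continuous, thus strictly convex, so $(\ln\mathcal{F}_k)'$ is strictly increasing and $\mathfrak{F}_k=-(\ln\mathcal{F}_k)'$ is strictly decreasing on $(0,\infty)$. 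Alternatively, decreasingness follows at once from the counting-measure form of Lemma~\ref{monotonicity-rule-qi} applied with $W(n,t)=n^{k}e^{-nt}>0$, $U(n)=n$, $V(n)\equiv1$, since $U/V=n$ is increasing in $n$ while $(\partial_tW)/W=-n$ is decreasing in $n$.

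It remains to pin down the range. Being continuous and strictly decreasing, $\mathfrak{F}_k$ maps $(0,\infty)$ onto the open interval $\bigl(\lim_{t\to\infty}\mathfrak{F}_k(t),\ \lim_{t\to0^{+}}\mathfrak{F}_k(t)\bigr)$. As $t\to\infty$ both series are dominated by their $n=1$ terms, equivalently the weights $w_n(t)$ concentrate at $n=1$, so $\mathfrak{F}_k(t)\to1$. As $t\to0^{+}$, differentiating the Laurent expansion $\frac{1}{e^{t}-1}=\frac1t-\frac12+\cdots$ term by term $k$ times gives $\mathcal{F}_k(t)=\frac{k!}{t^{k+1}}+O(1)$, hence $\mathfrak{F}_k(t)=\frac{(k+1)!/t^{k+2}+O(1)}{k!/t^{k+1}+O(1)}\sim\frac{k+1}{t}\to\infty$. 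This proves that $\mathfrak{F}_k$ maps $(0,\infty)$ onto $(1,\infty)$; one may also note directly, without limits, that $\mathfrak{F}_k(t)>1$ because it is an average of integers $\ge1$ at least two of which carry positive weight.

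I expect the only mildly delicate points to be the justification of term-by-term differentiation of $\sum_n e^{-nt}$ (routine, via locally uniform convergence on $(0,\infty)$) and the asymptotics $\mathcal{F}_k(t)\sim k!/t^{k+1}$ as $t\to0^{+}$ that fixes the lower endpoint of the range at $1$; both are standard. The conceptual core — strict logarithmic convexity of $\sum_n n^{k}e^{-nt}$, or equivalently the monotone-likelihood-ratio structure exploited through Lemma~\ref{monotonicity-rule-qi} — is immediate, so I do not anticipate a serious obstacle.
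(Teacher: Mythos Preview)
Your proof is correct, but it proceeds along a genuinely different route from the paper's. The paper does not reprove the decreasing property or the limit $\lim_{t\to\infty}\mathfrak{F}_k(t)=1$ at all; it simply cites~\cite[Theorem~3]{Exp-Diff-Ratio-Wei-Guo.tex} for both. Its only new contribution is the limit $t\to0^{+}$, which it obtains by inserting the Stirling-number identity of Lemma~\ref{Xu-Cen-rew-thm} (with $\vartheta=\theta=1$) into numerator and denominator, multiplying through by $t^{k+1}$ to convert each $\bigl(\frac{1}{e^t-1}\bigr)^p$ into $\bigl(\frac{t}{e^t-1}\bigr)^p t^{k+1-p}$, and then reading off that the $p=k+2$ term in the numerator blows up. You instead work entirely from the series $\mathcal{F}_k(t)=\sum_{n\ge1}n^{k}e^{-nt}$: the decreasing property comes from a clean Cauchy--Schwarz argument giving strict logarithmic convexity of $\mathcal{F}_k$, the limit at infinity from dominance of the $n=1$ term, and the limit at $0^{+}$ from the Laurent expansion $\frac{1}{e^t-1}=\frac{1}{t}+O(1)$ differentiated $k$ times. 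Your argument is self-contained and more elementary (it needs neither the external reference nor the Stirling-number machinery), and the observation that $\mathfrak{F}_k(t)>1$ as a non-trivial weighted mean of positive integers is a pleasant bonus; the paper's computation, on the other hand, illustrates the utility of the explicit formula~\eqref{id-gen-new-form1} already in play elsewhere in the article.
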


\begin{proof}
In~\cite[Theorem~3]{Exp-Diff-Ratio-Wei-Guo.tex}, the decreasing monotonicity of the ratio $\mathfrak{G}_j(x)$ in~\eqref{ratios-dfn-eq} and $\lim_{x\to\infty}\mathfrak{G}_j(x)=1$ has been proved.
\par
Employing the equation~\eqref{id-gen-new-form1} in Lemma~\ref{Xu-Cen-rew-thm} for $\varphi=\phi=1$ yields
\begin{align*}
\mathfrak{G}_j(x)&=\frac{(-1)^{j+1}\bigl(\frac{1}{\te^x-1}\bigr)^{(j+1)}}{(-1)^j\bigl(\frac{1}{\te^x-1}\bigr)^{(j)}}\\
&=\frac{\sum_{q=1}^{j+2}{(q-1)!S(j+2,q)}\bigl(\frac1{\te^{x}-1}\bigr)^q} {\sum_{q=1}^{j+1}{(q-1)!S(j+1,q)}\bigl(\frac{1}{\te^{x}-1}\bigr)^q}\\
&=\frac{\sum_{q=1}^{j+2}{(q-1)!S(j+2,q)}\bigl(\frac{x}{\te^{x}-1}\bigr)^qx^{j-q+1}} {\sum_{q=1}^{j+1}{(q-1)!S(j+1,q)}\bigl(\frac{x}{\te^{x}-1}\bigr)^qx^{j-q+1}}\\
&\to\frac{j!S(j+2,j+1)+(j+1)!S(j+2,j+2)\lim_{x\to0^+}x^{-1}}{j!S(j+1,j+1)},\quad x\to0^+\\
&=\infty,
\end{align*}
where we considered the limits
\begin{equation*}
\lim_{x\to0^+}x^{j-q+1}=
\begin{dcases}
\lim_{x\to0^+}x^{-1},& q=j+2;\\
1,& q=j+1;\\
0, & 1\le q\le j.
\end{dcases}
\end{equation*}
The proof of Proposition~\ref{ratio-cm-thm} is thus complete.
\end{proof}

\begin{rem}
This paper is a revised version of the electronic arXiv preprint arXiv:2201.06970 at the site \url{https://doi.org/10.48550/arXiv.2201.06970}.
\end{rem}

\subsubsection*{\bf Funding}
The second author was partially supported by the Youth Project of Hulunbuir City for Basic Research and Applied Basic Research (Grant No.~GH2024020) and by the Natural Science Foundation of Inner Mongolia Autonomous Region (Grant No.~2025QN01041).

\subsubsection*{\bf Acknowledgements}
The authors appreciate the anonymous referees for their careful corrections and valuable comments on the original version of this paper.

\centerline{(Received 27.10.2025, Revised 07.02.2026, Accepted 08.06.2026)}


\begin{thebibliography}{99}

\bibitem{abram}
M. Abramowitz and I. A. Stegun (Eds), \textit{Handbook of Mathematical Functions with Formulas, Graphs, and Mathematical Tables}, National Bureau of Standards, Applied Mathematics Series \textbf{55}, 10th printing, Washington, 1972.

\bibitem{Adell-Lekuona-JNT-2015}
J. A. Adell and A. Lekuona, \emph{Dirichlet's eta and beta functions: concavity and fast computation of their derivatives}, J. Number Theory \textbf{157} (2015), 215\nobreakdash--222. DOI: \url{https://doi.org/10.1016/j.jnt.2015.05.006}.

\bibitem{Alzer-Kwong-JNT-2015}
H. Alzer and M. K. Kwong, \emph{On the concavity of Dirichlet's eta function and related functional inequalities}, J. Number Theory \textbf{151} (2015), 172\nobreakdash--196. DOI: \url{https://doi.org/10.1016/j.jnt.2014.12.009}.

\bibitem{MIA-9509.tex}
L.-Y. Bao, C.-Y. He, and F. Qi, \textit{Monotonic sequences and inequalities involving the ratio between two adjacent nonzero Bernoulli numbers}, Math. Inequal. Appl. \textbf{29} (2026), no.~1, 1\nobreakdash--14. DOI: \url{https://doi.org/10.7153/mia-2026-29-01}.

\bibitem{CBerg}
C. Berg, \textit{Integral representation of some functions related to the gamma function}, Mediterr. J. Math. \textbf{1} (2004), no.~4, 433\nobreakdash--439. DOI: \url{https://doi.org/10.1007/s00009-004-0022-6}.

\bibitem{Bernstein=2018-MatrixMath}
D. S. Bernstein, \emph{Scalar, Vector, and Matrix Mathematics: Theory, Facts, and Formulas}, Revised and expanded edition, Princeton University Press, Princeton, NJ, 2018.

\bibitem{Cerone-Dragomie-Math-Nachr-2009}
P. Cerone and S. S. Dragomir, \emph{Some convexity properties of Dirichlet series with positive terms}, Math. Nachr. \textbf{282} (2009), no.~7, 964\nobreakdash--975. DOI: \url{https://doi.org/10.1002/mana.200610783}.

\bibitem{Dubourdieu96}
J. Dubourdieu, \textit{Sur un th\'eor\`eme de M. S. Bernstein relatif $\grave{a}$ la transformation de Laplace-Stieltjes}, Compositio Math. \textbf{7} (1939), 96\nobreakdash--111. URL: \url{http://www.numdam.org/item?id=CM_1940__7__96_0}. (French)

\bibitem{absolute-mon-simp.tex}
B.-N. Guo and F. Qi, \textit{A property of logarithmically absolutely monotonic functions and the logarithmically complete monotonicity of a power-exponential function}, Politehn. Univ. Bucharest Sci. Bull. Ser. A Appl. Math. Phys. \textbf{72} (2010), no.~2, 21\nobreakdash--30.

\bibitem{Eight-Identy-More.tex}
B.-N. Guo and F. Qi, \textit{Explicit formulae for computing Euler polynomials in terms of Stirling numbers of the second kind}, J. Comput. Appl. Math. \textbf{272} (2014), 251\nobreakdash--257. DOI: \url{https://doi.org/10.1016/j.cam.2014.05.018}.

\bibitem{exp-derivative-sum-Combined.tex}
B.-N. Guo and F. Qi, \textit{Some identities and an explicit formula for Bernoulli and Stirling numbers}, J. Comput. Appl. Math. \textbf{255} (2014), 568\nobreakdash--579. DOI: \url{https://doi.org/10.1016/j.cam.2013.06.020}.

\bibitem{Hu-Kim-JMAA-2019}
S. Hu and M.-S. Kim, \emph{On Dirichlet's lambda function}, J. Math. Anal. Appl. \textbf{478} (2019), no.~2, 952\nobreakdash--972. DOI: \url{https://doi.org/10.1016/j.jmaa.2019.05.061}.

\bibitem{Lang-3rd-RF}
S. Lang, \emph{Real and Functional Analysis}, Third edition, Graduate Texts in Mathematics, 142. Springer-Verlag, New York, 1993. DOI: \url{https://doi.org/10.1007/978-1-4612-0897-6}.

\bibitem{Mon-Eta-Ratio.tex}
D. Lim and F. Qi, \textit{Increasing property and logarithmic convexity of two functions involving Dirichlet eta function}, J. Math. Inequal. \textbf{16} (2022), no.~2, 463\nobreakdash--469. DOI: \url{https://doi.org/10.7153/jmi-2022-16-33}.

\bibitem{mpf-1993}
D. S. Mitrinovi\'c, J. E. Pe\v{c}ari\'c, and A. M. Fink, \textit{Classical and New Inequalities in Analysis}, Kluwer Academic Publishers, Dordrecht-Boston-London, 1993. DOI: \url{https://doi.org/10.1007/978-94-017-1043-5}.

\bibitem{CAM-D-18-00067.tex}
F. Qi, \textit{A double inequality for the ratio of two non-zero neighbouring Bernoulli numbers}, J. Comput. Appl. Math. \textbf{351} (2019), 1\nobreakdash--5. DOI: \url{https://doi.org/10.1016/j.cam.2018.10.049}.

\bibitem{Bell-Stirling-Lah-simp.tex}
F. Qi, \textit{An explicit formula for the Bell numbers in terms of the Lah and Stirling numbers}, Mediterr. J. Math. \textbf{13} (2016), no.~5, 2795\nobreakdash--2800. DOI: \url{https://doi.org/10.1007/s00009-015-0655-7}.

\bibitem{Alice-y-x-conj-one.tex}
F. Qi, \emph{Decreasing properties of two ratios defined by three and four polygamma functions}, C. R. Math. Acad. Sci. Paris \textbf{360} (2022), 89\nobreakdash--101. DOI: \url{https://doi.org/10.5802/crmath.296}.

\bibitem{MIA-4666.tex}
F. Qi, \textit{Diagonal recurrence relations, inequalities, and monotonicity related to the Stirling numbers of the second kind}, Math. Inequal. Appl. \textbf{19} (2016), no.~1, 313\nobreakdash--323. DOI: \url{https://doi.org/10.7153/mia-19-23}.

\bibitem{Rem-Bern-Ratio-Ineq.tex}
F. Qi, \textit{Notes on a double inequality for ratios of any two neighbouring non-zero Bernoulli numbers}, Turkish J. Anal. Number Theory \textbf{6} (2018), no.~5, 129\nobreakdash--131. DOI: \url{https://doi.org/10.12691/tjant-6-5-1}.

\bibitem{compmon2}
F. Qi and C.-P. Chen, \textit{A complete monotonicity property of the gamma function}, J. Math. Anal. Appl. \textbf{296} (2004), 603\nobreakdash--607. DOI: \url{https://doi.org/10.1016/j.jmaa.2004.04.026}.

\bibitem{Ouimet-LCM-BKMS.tex}
F. Qi, W.-H. Li, S.-B. Yu, X.-Y. Du, and B.-N. Guo, \textit{A ratio of finitely many gamma functions and its properties with applications}, Rev. R. Acad. Cienc. Exactas F\'{\i}s. Nat. Ser. A Math. RACSAM. \textbf{115} (2021), no.~2, Paper No.~39, 14~pp. DOI: \url{https://doi.org/10.1007/s13398-020-00988-z}.

\bibitem{dema-D-22-00076.tex}
F. Qi and D. Lim, \emph{Increasing property and logarithmic convexity of functions involving Dirichlet lambda function}, Demonstr. Math. \textbf{56} (2023), no.~1, Art. No.~20220243, 6~pp. DOI: \url{https://doi.org/10.1515/dema-2022-0243}.

\bibitem{HJMS1099250.tex}
F. Qi and Y.-H. Yao, \textit{Increasing property and logarithmic convexity concerning Dirichlet beta function, Euler numbers, and their ratios}, Hacet. J. Math. Stat. \textbf{52} (2023), no.~1, 17\nobreakdash--22. DOI: \url{https://doi.org/10.15672/hujms.1099250}.

\bibitem{Quaintance-Gould-Stirling-B}
J. Quaintance and H. W. Gould, \emph{Combinatorial Identities for Stirling Numbers}, The unpublished notes of H. W. Gould. With a foreword by George E. Andrews. World Scientific Publishing Co. Pte. Ltd., Singapore, 2016.

\bibitem{Schilling-Song-Vondracek-2nd}
R. L. Schilling, R. Song, and Z. Vondra\v{c}ek, \textit{Bernstein Functions}, 2nd ed., de Gruyter Studies in Mathematics \textbf{37}, Walter de Gruyter, Berlin, Germany, 2012. DOI: \url{https://doi.org/10.1515/9783110269338}.

\bibitem{RCSM-D-21-00302.tex}
Y. Shuang, B.-N. Guo, and F. Qi, \textit{Logarithmic convexity and increasing property of the Bernoulli numbers and their ratios}, Rev. R. Acad. Cienc. Exactas F\'{\i}s. Nat. Ser. A Mat. RACSAM \textbf{115} (2021), no.~3, Paper No.~135, 12~pp. DOI: \url{https://doi.org/10.1007/s13398-021-01071-x}.

\bibitem{Temme-96-book}
N. M. Temme, \emph{Special Functions: An Introduction to Classical Functions of Mathematical Physics}, A Wiley-Interscience Publication, John Wiley \& Sons, Inc., New York, 1996. DOI: \url{https://doi.org/10.1002/9781118032572}.

\bibitem{haerc1}
H. van Haeringen, \textit{Completely monotonic and related functions}, J. Math. Anal. Appl. \textbf{204} (1996), no.~2, 389\nobreakdash--408. DOI: \url{https://doi.org/10.1006/jmaa.1996.0443}.

\bibitem{kc-wang-zeta-98}
K. C. Wang, \textit{The logarithmic concavity of $(1-2\sp{1-r})\zeta(r)$}, J. Changsha Comm. Univ. \textbf{14} (1998), no.~2, 1\nobreakdash--5. (Chinese)

\bibitem{Exp-Diff-Ratio-Wei-Guo.tex}
C.-F. Wei and B.-N. Guo, \emph{Complete monotonicity of functions connected with the exponential function and derivatives}, Abstr. Appl. Anal. \textbf{2014} (2014), Art. ID~851213, 5~pp. DOI: \url{https://doi.org/10.1155/2014/851213}.

\bibitem{widder}
D. V. Widder, \textit{The Laplace Transform}, Princeton University Press, Princeton, 1946.

\bibitem{CAM-D-13-01430-Xu-Cen}
A.-M. Xu and Z.-D. Cen, \textit{Some identities involving exponential functions and Stirling numbers and applications}, J. Comput. Appl. Math. \textbf{260} (2014), 201\nobreakdash--207. DOI: \url{https://doi.org/10.1016/j.cam.2013.09.077}.

\bibitem{Yang-Tian-JCAM-2020}
Z.-H. Yang and J.-F. Tian, \emph{Sharp bounds for the ratio of two zeta functions}, J. Comput. Appl. Math. \textbf{364} (2020), 112359, 14~pp. DOI: \url{https://doi.org/10.1016/j.cam.2019.112359}.

\bibitem{JMI-4654.tex}
G.-Z. Zhang and F. Qi, \textit{On convexity and power series expansion for logarithm of normalized tail of power series expansion for square of tangent}, J. Math. Inequal. \textbf{18} (2024), no.~3, 937\nobreakdash--952. DOI: \url{https://doi.org/10.7153/jmi-2024-18-51}.

\bibitem{Zhu-RACSAM-2020}
L. Zhu, \emph{New bounds for the ratio of two adjacent even-indexed Bernoulli numbers}, Rev. R. Acad. Cienc. Exactas Fís. Nat. Ser. A Mat. RACSAM \textbf{114} (2020), no.~2, Paper No.~83, 13~pp. DOI: \url{https://doi.org/10.1007/s13398-020-00814-6}.

\bibitem{Zhu-Hua-JIA-2010}
L. Zhu and J.-K. Hua, \emph{Sharpening the Becker-Stark inequalities}, J. Inequal. Appl. \textbf{2010} (2010), Art. ID~931275, 4~pp. DOI: \url{https://doi.org/10.1155/2010/931275}.

\end{thebibliography}
\end{document}